\theoremstyle{plain}
\newtheorem*{teor}{Theorem}
\newtheorem{corollary}{Corollary}
\theoremstyle{remark}
\newcommand{\be}{\begin{equation}}
\newcommand{\ee}{\end{equation}}
\theoremstyle{plain}
\theoremstyle{plain}
\newtheorem{lemma}{Lemma}
\newcommand{\beps}{B_{\varepsilon}}
\title{Entropy, Lyapunov exponents and the volume growth of boundary
distortion under the action of dynamical systems}
\author{B.M. Gurevich \footnote{Department of
Mechanics and Mathematics, Moscow State University, and the
Institute for Information Transmission Problems, Russian Academy of
Sciences.} \ and S.A. Komech\footnote{Institute for Information
Transmission Problems, Russian Academy of Sciences.}\thanks{The work
of both authors is partially supported by RFBR grant 11-01-00485}.}
\date{}
\begin{document}
\maketitle



\section{Introduction.}
\label{introd}


Apart from the well-known studies linking the entropy of a measure
preserving smooth dynamical system with Lyapunov exponents (see
\cite{P}, \cite{K1}, \cite{K2}, \cite{LY1}, \cite{LY2}),
there is a few works dealing with geometric meaning of the
measure-theoretic entropy. One of them is \cite{BK}, where the
so-called local entropy was introduced, which turned out to coincide
with the entropy.

Another approach was used in \cite{G} for a class of symbolic
dynamical systems, but it is applicable to a much more general
situation and is as follows.

Let $f$ be a homeomorphism of a metric space $(X,\rho)$ and $\mu$ an
$f$-invariant Borel probability measure on $X$. For a point $x\in X$
we consider the $\varepsilon$-ball $B(x,\varepsilon)$ around $x$ and
treat the quantity
\begin{equation}
\label{ratio} \frac{1}{k}\ln\frac{\mu(O_{\varepsilon}(f^k
B(x,\varepsilon)))} {\mu(B(x,\varepsilon))},
\end{equation}
where $O_{\varepsilon}(A)$ is the $\varepsilon$-neighborhood of a
set $A\subset X$, as a logarithmic deformation degree of the
boundary of $B(x,\varepsilon)$ under the action of $f^k$.



It is natural to pass to the limit as $\varepsilon\to 0$ and
$k\to\infty$, but one easily sees that a nontrivial asymptotics is
possible only if there is a relation between $k$ and $\varepsilon$.
All the results were obtained when $k=k(\varepsilon)$ and
\begin{equation}
\label{conditions} \lim_{\varepsilon\to
0}k(\varepsilon)/\ln\varepsilon=0,\ \ \ \lim_{\varepsilon\to
0}k(\varepsilon)=\infty.
\end{equation}

For a subshift of finite type $(X,f)$ and for an arbitrary invariant
ergodic probability measure $\mu$ it was established in \cite{G}
that the expression (\ref{ratio}) converges in $L^1_\mu$ to
$h_{\mu}(f)$, the measure theoretic entropy of the shift
transformation $f$ with respect to $\mu$. This was generalized to
synchronized systems and hence to all sofic systems in \cite{Ko}.

For the smooth dynamical systems, precisely $n$-dimensional torus
automorphism, preserving the Lebesgue measure, the convergence of
(\ref{ratio}) to $h_{\mu}(f)$ at each point of the torus was proved
in \cite{ESAIM}.

Note that there is a special feature in the smooth case: the
existence of a natural measure on $X$, namely, the Lebesgue measure,
and it is reasonable to study the asymptotic behavior of
(\ref{ratio}) for this measure albeit it could be not invariant. We
do this for Anosov diffeomorphisms, but the result can be extended
to a wider class of smooth dynamical systems. Our main goal here is
to prove the following
\begin{teor}
Let $f$ be a $\in C^{1+\alpha}(M)$ Anosov diffeomorphisms of a
compact Riemannian manifold $M$ without a boundary and $\nu$ be an
$f$-invariant ergodic Borel probability measure on $M$. Then for any
function $k:\mathbb R^+\to\mathbb Z^+$ satisfying (\ref{conditions})
and for $\nu$-a.e. $x$,
\begin{equation}
\label{main} \lim_{\varepsilon\to 0}
\frac{1}{k(\varepsilon)} \ln\frac{\mu\left(
O^{\varepsilon}(f^{k(\varepsilon)}
B(x,\varepsilon))\right)}{\mu(B(x,\varepsilon))}=
\sum_{i:\lambda_i>0}\lambda_i d_i=:\lambda_\nu^+,
\end{equation}
where $\mu$ is the Riemannian volume, $\lambda_i$ are the Lyapunov
exponents of $\nu$ and $d_i$ are their multiplicities.
\end{teor}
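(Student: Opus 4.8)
Fix a point $x$ in the full-measure set where the Birkhoff averages below converge, write $d=\dim M$, let $T_yM=E^u(y)\oplus E^s(y)$ be the (H\"older continuous) Anosov splitting, $d^u=\dim E^u$, $d^s=d-d^u$, and $L=\ln\max_M\|Df\|$. Three standard ingredients will be used without proof: (i) by the Birkhoff ergodic theorem applied to the continuous function $\ln|\det(Df|_{E^u})|$, for $\nu$-a.e.\ $x$ one has $\frac1k\ln|\det(D_xf^k|_{E^u(x)})|\to\int\ln|\det(Df|_{E^u})|\,d\nu=\sum_{i:\lambda_i>0}\lambda_id_i=\lambda_\nu^+$ (the middle equality being the Oseledets identity); (ii) $c_0\varepsilon^d\le\mu(B(x,\varepsilon))\le c_0^{-1}\varepsilon^d$ uniformly in $x$, by compactness of $M$ — note that $\mu$-invariance is nowhere needed; (iii) sufficiently small connected pieces of unstable manifolds, together with the local product structure and the uniform stable contraction rate $\chi>0$, have the usual uniform geometry. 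The decisive quantitative remark is that $k(\varepsilon)/\ln\varepsilon\to0$ means $k(\varepsilon)=o(\ln(1/\varepsilon))$, so $\varepsilon\,e^{Ck(\varepsilon)}\to0$ for every fixed $C$, whereas $\frac1{k(\varepsilon)}\ln C(x)\to0$ for every $x$-dependent constant. In particular the orbit segment $x,fx,\dots,f^{k(\varepsilon)}x$ is long, yet the iterates $f^jB(x,\varepsilon)$, $0\le j\le k(\varepsilon)$, all have diameter $\le\varepsilon e^{Lk(\varepsilon)}\to0$ and remain in a single chart.

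From $C^{1+\alpha}$ and this smallness one gets bounded distortion along the orbit: for $y,y'\in B(x,\varepsilon)$,
\be
\Bigl|\ln\frac{|\det(D_yf^k|_{E^u(y)})|}{|\det(D_{y'}f^k|_{E^u(y')})|}\Bigr|\le c\sum_{j=0}^{k-1}d(f^jy,f^jy')^\alpha\le c\,\varepsilon^\alpha e^{L\alpha k}\to0,
\ee
so, combined with (i), $|\det(D_yf^k|_{E^u(y)})|=e^{k(\lambda_\nu^+ +o(1))}$ uniformly on $B(x,\varepsilon)$. Put $W=\{y\in W^u_{\mathrm{loc}}(x):d_{W^u}(x,y)<\varepsilon\}\subset B(x,\varepsilon)$, a $d^u$-disc with $\mathrm{vol}_{d^u}(W)\asymp\varepsilon^{d^u}$. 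By the area formula, $\mathrm{vol}_{d^u}(f^kW)=\int_W|\det(D_yf^k|_{E^u(y)})|\,d\mathrm{vol}_{d^u}=\varepsilon^{d^u}e^{k(\lambda_\nu^+ +o(1))}$. Moreover $f^kW$ is a connected piece of the unstable manifold through $f^kx$ of diameter $\le\varepsilon e^{Lk}\to0$, hence by (iii) a graph over $E^u(f^kx)$ with $C^1$-norm $o(1)$; since in addition every unstable vector is expanded by $f^k$ by a factor $\ge e^{k\gamma}/C$ (Anosov), this nearly flat sheet is also ``wide'' in every unstable direction, and the volume of its $\varepsilon$-neighbourhood is controlled above and below by a tube estimate: $c\,\varepsilon^{d^s}\mathrm{vol}_{d^u}(f^kW)\le\mu(O^\varepsilon(f^kW))\le C\,\varepsilon^{d^s}\mathrm{vol}_{d^u}(f^kW)$, i.e.\ $\mu(O^\varepsilon(f^kW))=\varepsilon^{d}e^{k(\lambda_\nu^+ +o(1))}$.

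It remains to squeeze $f^kB(x,\varepsilon)$ between two such neighbourhoods. Since $W\subset B(x,\varepsilon)$ we have $O^\varepsilon(f^kW)\subset O^\varepsilon(f^kB(x,\varepsilon))$, which gives the lower bound. For the upper bound, for each $y\in B(x,\varepsilon)$ let $y'=W^u_{\mathrm{loc}}(x)\cap W^s_{\mathrm{loc}}(y)$ (local product structure): then $y'\in\widetilde W:=\{y\in W^u_{\mathrm{loc}}(x):d_{W^u}(x,y)<C\varepsilon\}$, and, $y'$ lying on the stable manifold of $y$, $d(f^ky,f^ky')\le C e^{-\chi k}\varepsilon\le\varepsilon$ once $\varepsilon$ is small (so $k(\varepsilon)$ large). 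Hence $f^kB(x,\varepsilon)\subset O^\varepsilon(f^k\widetilde W)$ and $O^\varepsilon(f^kB(x,\varepsilon))\subset O^{2\varepsilon}(f^k\widetilde W)$, and the previous paragraph applied to $\widetilde W$ (which merely inflates multiplicative constants) bounds the latter by $C\varepsilon^de^{k(\lambda_\nu^+ +o(1))}$. Combining, $\mu(O^\varepsilon(f^kB(x,\varepsilon)))=\varepsilon^de^{k(\lambda_\nu^+ +o(1))}$; dividing by $\mu(B(x,\varepsilon))\asymp\varepsilon^d$, taking logarithms and multiplying by $1/k(\varepsilon)$, all constants and $o(1)$-terms disappear as $\varepsilon\to0$ (hence $k(\varepsilon)\to\infty$), which is \eqref{main}.

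The main obstacle is the joint bookkeeping of the two opposite limits: distortion and Birkhoff-type estimates are needed along an orbit segment of diverging length $k(\varepsilon)$, while the object iterated is a ball of radius $\varepsilon\to0$. Everything hinges on $\varepsilon\,e^{Ck(\varepsilon)}\to0$, forced by $k(\varepsilon)=o(\ln(1/\varepsilon))$: it keeps all iterates of $B(x,\varepsilon)$ microscopic — so distortion tends to $1$ and $f^kW$ is a single nearly flat sheet — while the factor $1/k(\varepsilon)$ in front annihilates every $x$-dependent constant and every $o(k)$ error; making the relevant $o(1)$'s genuinely uniform along the prescribed sequence $k=k(\varepsilon)$ is the point that must be handled with care. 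The second, geometric, difficulty is that although $f^kB(x,\varepsilon)$ is full-dimensional and stretched by $e^{k\lambda_i}$ in the unstable directions, it is collapsed in the stable directions to within $o(\varepsilon)$ of the single $d^u$-dimensional sheet $f^kW$; it is precisely this reduction that makes $\mu(O^\varepsilon(\cdot))$ governed by the unstable Jacobian alone and yields $\lambda_\nu^+$ rather than the sum of all exponents.
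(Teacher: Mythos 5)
Your proposal is correct and follows essentially the same route as the paper: projection of the ball onto the local unstable manifold via the product structure, volume growth there via the unstable Jacobian with a H\"older distortion estimate made negligible by $k(\varepsilon)=o(\ln(1/\varepsilon))$ plus the Birkhoff/Oseledets identity, and then conversion of the $\varepsilon$-neighbourhood volume into unstable volume times a transversal factor. The only difference is bookkeeping: where you invoke explicit $\varepsilon^{d}$, $\varepsilon^{d^s}$ asymptotics and a ``tube estimate'' (whose upper bound needs the small care near the boundary of the sheet that you gloss over), the paper proves the same comparison by a maximal $\varepsilon$-separated packing of the image sheet together with a uniformity lemma for ball volumes, so that all dimensional constants cancel in the ratio.
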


\section{Proof of the Theorem.}
\label{proof}

For some $\delta>0$ introduce local stable and unstable manifolds
$W_{\delta}^s(x)$ and $W_{\delta}^u(x)$, respectively, and assume
that $\delta$ and $\varepsilon$ are so small that for all $x\in M$
the intersection $W_{\delta}^s(y)\cap W_{\delta}^u(x)$ is exactly
one point when $\rho(x,y)\leq\varepsilon$ (here $\rho$ is the
Riemannian metric). Consider the set
$$
P_u(x,\varepsilon):=\bigcup_{y\in B(x,\varepsilon)}
(W_{\delta}^s(y)\cap W_{\delta}^u(x)).
$$
We see that $P_u(x,\varepsilon)$ is the "projection" of
$B(x,\varepsilon)$ on $W_{\delta}^u(x)$ along the stable leaves
$W_{\delta}^s$.

For every $x\in M$ and $r\ge0$ denote the ball of radius $r$ on
$W_{\delta}^u(x)$ around $x$ (in the induced metric $\rho^u$) by
$B^u(x,r)$.
\begin{lemma}
\label{between} There exist $C_1,C_2 >0$ such that for all
sufficiently small $\varepsilon>0$,
$$
B^u (x,C_1\varepsilon)\subset P_u(x,\varepsilon)\subset  B^u
(x,C_2\varepsilon).
$$
\end{lemma}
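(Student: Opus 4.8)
The plan is to realize $P_u(x,\varepsilon)$ as the image of $B(x,\varepsilon)$ under the stable--holonomy projection onto $W^u_\delta(x)$ and to control this map uniformly in $x$. For $\delta,\varepsilon$ as in the statement define
$$
\pi^u_x:B(x,\varepsilon)\to W^u_\delta(x),\qquad
\pi^u_x(y):=W^s_\delta(y)\cap W^u_\delta(x),
$$
which is a single point by hypothesis, so $P_u(x,\varepsilon)=\pi^u_x(B(x,\varepsilon))$. Two elementary facts are used repeatedly: $\pi^u_x(x)=x$, since $x\in W^s_\delta(x)\cap W^u_\delta(x)$ and the intersection is a singleton; and $\pi^u_x(z)=z$ for every $z\in W^u_\delta(x)\cap B(x,\varepsilon)$, by the same reasoning applied with $z$ in place of $x$ as the ``$y$'' argument.

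The left inclusion is immediate. The intrinsic Riemannian distance $\rho^u$ on the submanifold $W^u_\delta(x)$ dominates the ambient distance $\rho$, since every path inside $W^u_\delta(x)$ is also a path in $M$. Hence, once $\varepsilon$ is small enough that $B^u(x,\varepsilon)\subset W^u_\delta(x)$, every $z\in B^u(x,\varepsilon)$ satisfies $\rho(z,x)\le\rho^u(z,x)<\varepsilon$, so $z\in W^u_\delta(x)\cap B(x,\varepsilon)$ and therefore $z=\pi^u_x(z)\in P_u(x,\varepsilon)$. This gives $B^u(x,\varepsilon)\subset P_u(x,\varepsilon)$, so one may take $C_1=1$.

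The right inclusion is the substantive part: the claim is that there is a constant $L>0$, independent of $x$, with $\rho^u(\pi^u_x(y),x)\le L\,\rho(y,x)$ for all $x\in M$ and all $y\in B(x,\varepsilon)$ with $\varepsilon$ small. Granting this, $\rho^u(\pi^u_x(y),x)=\rho^u(\pi^u_x(y),\pi^u_x(x))\le L\varepsilon$, whence $P_u(x,\varepsilon)\subset B^u(x,L\varepsilon)$ and $C_2=L$ works. To prove the claim I would invoke the standard consequences of $f$ being $C^{1+\alpha}$ Anosov on a compact manifold: the splitting $TM=E^s\oplus E^u$ is continuous with a uniform lower bound on the angle between $E^s$ and $E^u$; the local leaves $W^s_\delta$, $W^u_\delta$ are $C^1$ submanifolds tangent to $E^s$, $E^u$ with size and $C^1$-geometry uniform over $M$; and through each base point the family of local stable plaques varies continuously in the $C^1$ topology. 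Fixing $x$ and passing to a chart of uniformly bounded distortion around $x$, adapted so that $W^u_\delta(x)$ corresponds to $E^u(x)$ and $E^s(x)$, $E^u(x)$ are coordinate subspaces, each local stable plaque near $x$ is a graph over the $E^s(x)$-direction whose Lipschitz constant is bounded by a constant depending only on the uniform angle bound. Writing $y$ in these coordinates as $(y_u,y_s)$ and $x$ as the origin, $\pi^u_x(y)$ is obtained by following the stable plaque through $y$ to the subspace $\{y_s=0\}$, which displaces the $E^u(x)$-coordinate by at most that Lipschitz constant times $|y_s|$; hence $|\pi^u_x(y)|\le|y_u|+\mathrm{const}\cdot|y_s|\le\mathrm{const}\cdot\rho(y,x)$, uniformly in $x$. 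Converting back to $\rho^u$ costs one more uniform factor, and the claim follows.

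The only real obstacle is bookkeeping the uniformity in $x$: one must combine compactness of $M$ with the continuity of the invariant foliations and the uniform transversality of $E^s$ and $E^u$ to guarantee that the charts above can be chosen of uniformly bounded size and distortion and that the Lipschitz constants of the stable plaques are bounded independently of the base point. No regularity of the foliation in the transverse direction is needed --- only that each individual leaf is $C^1$ and that the plaque family is continuous --- so the merely H\"older dependence of $x\mapsto E^s(x)$ is harmless here.
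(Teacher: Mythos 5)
Your argument is correct and rests on exactly the two ingredients the paper itself cites for this lemma: the uniform lower bound on the angle between the stable and unstable leaves (which gives the uniform Lipschitz control of the holonomy projection and hence $C_2$), and the local comparability of the intrinsic metric $\rho^u$ with the ambient metric $\rho$ on an unstable leaf. Your observation that the trivial inequality $\rho\le\rho^u$ already yields the left inclusion with $C_1=1$ is a clean way to dispose of that half.
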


The proof relies on the following two facts: 1) the angle between
$W_{\delta}^s(x)$ and $W_{\delta}^u(x)$ as a function of $x$ is
bounded away from zero; 2) there are $c>0$ and $\varepsilon>0$ such
that if $y\in W_{\delta}^u(x)$ and $\rho(x,y)\le\varepsilon$, then
$\rho^u(x,y)\le c\varepsilon$.

The following lemma will allow us to obtain upper and lower
estimates for the volume growth in an unstable manifold.
\begin{lemma}
\label{limit1} Let $\{\beps,\varepsilon>0\}$ be a family of subsets
of $W_\delta^u(x)$ such that ${\rm
diam}(\beps)\leq\gamma\varepsilon,\ \gamma>0$ and $x\in\beps$ for
each $\varepsilon$. If $f$, $k(\varepsilon)$ and $\nu$ are as in the
above Theorem, then for $\nu$-a.e. $x\in M$
\begin{equation}
\label{limit} \lim_{\varepsilon\to 0}\frac{1}{k(\varepsilon)}\ln
\frac{\mu^u(f^{k(\varepsilon)} \beps)}{\mu^u(\beps)}=\lambda^+_\nu,
\end{equation}
where $\mu^u$ is the Riemannian volume in the corresponding unstable
manifold.
\end{lemma}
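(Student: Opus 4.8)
The plan is to reduce~(\ref{limit}) to the Birkhoff ergodic theorem applied to the logarithm of the unstable Jacobian, with hypothesis~(\ref{conditions}) entering through a bounded-distortion estimate. Write $J^u(y):=\bigl|\det\bigl(Df_y|_{E^u(y)}\bigr)\bigr|$ for the unstable Jacobian of $f$ at $y$ (determinant taken with respect to orthonormal bases of $E^u(y)$ and $E^u(fy)$). Since $f\in C^{1+\alpha}$, the distribution $E^u$ is H\"older continuous, so $\ln J^u$ is a H\"older function on $M$; being continuous and bounded away from $0$ and $\infty$ it lies in $L^1(\nu)$. Abbreviating $k=k(\varepsilon)$ and using that $f^{k}$ restricts to a diffeomorphism of the unstable leaf through $x$ onto the unstable leaf through $f^kx$, the change-of-variables formula along the leaf together with the chain rule gives
\[
\mu^u\bigl(f^{k}B_\varepsilon\bigr)=\int_{B_\varepsilon}\bigl|\det\bigl(Df^{k}_y|_{E^u(y)}\bigr)\bigr|\,d\mu^u(y)=\int_{B_\varepsilon}\prod_{j=0}^{k-1}J^u(f^jy)\,d\mu^u(y).
\]

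The key step is to bound the oscillation of this integrand over $B_\varepsilon$. Let $\mathrm{diam}^u$ denote the diameter along an unstable leaf. Since $f^{-1}$ contracts the leafwise metric uniformly, $\mathrm{diam}^u(f^{j}B_\varepsilon)\le C\,\theta^{\,k-j}\,\mathrm{diam}^u(f^{k}B_\varepsilon)$ for $0\le j\le k$ with some $\theta\in(0,1)$, while with $\Lambda:=\sup_M\|Df\|$ one has $\mathrm{diam}^u(f^{k}B_\varepsilon)\le C'\Lambda^{k}\varepsilon$ (using $\mathrm{diam}(B_\varepsilon)\le\gamma\varepsilon$ and the comparability of $\rho$ and $\rho^u$ on small sets, cf.\ Lemma~\ref{between}). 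Hence, letting $\beta$ be a H\"older exponent of $\ln J^u$ and summing the resulting geometric series in $j$,
\[
\sup_{y,z\in B_\varepsilon}\Bigl|\sum_{j=0}^{k-1}\bigl(\ln J^u(f^jy)-\ln J^u(f^jz)\bigr)\Bigr|\le L\sum_{j=0}^{k-1}\mathrm{diam}^u(f^jB_\varepsilon)^{\beta}\le L'\bigl(\Lambda^{k}\varepsilon\bigr)^{\beta}=:\eta(\varepsilon).
\]
Now~(\ref{conditions}) is used: $k(\varepsilon)/|\ln\varepsilon|\to0$ forces $\ln\varepsilon+k(\varepsilon)\ln\Lambda\to-\infty$, so $\eta(\varepsilon)\to0$; in particular $\mathrm{diam}^u(f^{k}B_\varepsilon)\to0$, so the sets $f^jB_\varepsilon$, $0\le j\le k$, stay in small disks of single leaves and all of the above is meaningful.

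To conclude, since $x\in B_\varepsilon$ the distortion bound gives $\prod_{j=0}^{k-1}J^u(f^jy)=e^{\vartheta_\varepsilon\eta(\varepsilon)}\prod_{j=0}^{k-1}J^u(f^jx)$ with $|\vartheta_\varepsilon|\le1$ uniformly in $y\in B_\varepsilon$, so that
\[
\frac{1}{k(\varepsilon)}\ln\frac{\mu^u(f^{k(\varepsilon)}B_\varepsilon)}{\mu^u(B_\varepsilon)}=\frac{1}{k(\varepsilon)}\sum_{j=0}^{k(\varepsilon)-1}\ln J^u(f^jx)+O\!\left(\frac{\eta(\varepsilon)}{k(\varepsilon)}\right),
\]
and the remainder tends to $0$ because $\eta(\varepsilon)\to0$ and $k(\varepsilon)\to\infty$. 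For the main term, the Birkhoff ergodic theorem (valid since $\ln J^u\in L^1(\nu)$ and $\nu$ is ergodic) together with $k(\varepsilon)\to\infty$ yields convergence, for $\nu$-a.e.\ $x$, to $\int_M\ln J^u\,d\nu$. Finally, for an Anosov diffeomorphism $E^u$ is exactly the sum of the Oseledets subspaces with positive Lyapunov exponents, so the multiplicative ergodic theorem gives $\tfrac1n\ln\bigl|\det(Df^n_x|_{E^u(x)})\bigr|\to\sum_{i:\lambda_i>0}\lambda_i d_i=\lambda_\nu^+$ for $\nu$-a.e.\ $x$; since the left-hand side equals $\tfrac1n\sum_{j=0}^{n-1}\ln J^u(f^jx)$, comparison with the Birkhoff limit shows $\int_M\ln J^u\,d\nu=\lambda_\nu^+$, which is~(\ref{limit}).

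I expect the main obstacle to be the bounded-distortion step: one has to arrange the diameter bounds so that the H\"older sum $\sum_j\mathrm{diam}^u(f^jB_\varepsilon)^{\beta}$ telescopes against the backward contraction, and then check that the resulting bound $\eta(\varepsilon)\asymp(\Lambda^{k}\varepsilon)^{\beta}$ is driven to zero precisely by the assumption $k(\varepsilon)=o(|\ln\varepsilon|)$. The remaining ingredients --- the leafwise change of variables, Birkhoff's theorem, and the Oseledets identification of $\int_M\ln J^u\,d\nu$ with $\lambda_\nu^+$ --- are standard.
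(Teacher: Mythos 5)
Your proof is correct and follows essentially the same route as the paper's: both reduce the ratio to the Birkhoff average of $\ln J^u$ along the orbit of $x$, control the discrepancy via H\"older continuity of the unstable Jacobian together with the bound ${\rm diam}^u(f^jB_\varepsilon)\le\mathrm{const}\cdot\Lambda^{j}\varepsilon$ (so that condition (\ref{conditions}) drives the accumulated error to zero), and identify the limit $\int_M\ln J^u\,d\nu=\lambda^+_\nu$ via Oseledets. The only difference is cosmetic: the paper applies the Mean Value Theorem at each iterate to pick points $x_i\in f^iB_\varepsilon$, whereas you phrase the same comparison as a bounded-distortion estimate after a leafwise change of variables.
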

\begin{proof}
For brevity we will write $k$ instead of $k(\varepsilon)$. Since the
unstable Jacobian $J^u$ is a continuous function on $M$ (see
\cite{KH}, Section 19.1), we can apply the Mean Value Theorem to
obtain a sequence of points $x_i\in f^i B_\varepsilon$, $0\le i\le
k-1$, such that
\begin{align}
\mu^u(f^k\beps)=&\int_{f^{k-1}\beps}J^u(y)\mu^u(dy) \notag\\
=&J^u(x_{k-1})
\mu^u(f^{k-1}\beps)=\dots=\mu^u(\beps)\Pi_{i=0}^{k-1}J^u(x_i).
\end{align}

By the compactness of $M$ there exists a $\beta>0$ such that
$\rho(f(y),f(z))\leq\beta\rho(y,z)$. Therefore $\rho(x_i,f^i x)\leq
\gamma\varepsilon\beta^i$. Using the fact that $J^u $ is H\"{o}lder
continuous with some exponent $\alpha>0$ and a factor $C>0$ (see
\cite{KH}, Section 19.1), we obtain
\begin{align} \label{upper}\frac{1}{k}\ln\frac{\mu^u(f^{k}
\beps)}{\mu^u(\beps)}=&\frac{1}{k}\ln\prod_{i=0}^{k-1}
J^u(x_i)\leq\frac{1}{k}\sum_{i=0}^{k-1}\ln(J^u(f^ix)+C(\varepsilon
\gamma\beta^i)^{\alpha}) \notag \\
\leq \frac{1}{k}\sum_{i=0}^{k-1}\ln & J^u(f^i x)+
\frac{1}{k}\sum_{i=0}^{k-1}\ln\left(1+\frac{C(\varepsilon
\gamma\beta^i)^{\alpha}}{J^u(f^ix)}\right)\leq\frac{1}{k}
\sum_{i=0}^{k-1}\ln J^u(f^i x)\notag\\
+\frac{1}{k}\sum_{i=0}^{k-1}\frac{C(\varepsilon
\gamma\beta^i)^{\alpha}}{J^u(f^ix)} &
\leq\frac{1}{k}\sum_{i=0}^{k-1}\ln J^u(f^i x)+\frac{C
(\varepsilon\gamma)^\alpha(\beta^{k\alpha}-1)}{k\min_{y\in
M}J^u(y)(\beta-1)}.
\end{align}
Conditions (\ref{conditions}) imply that the second term in
(\ref{upper}) tends to zero as $\varepsilon\to 0$. But the first
term tends to $\lambda^+_\nu(x)$ (for more details on the unstable
Jacobian see \cite{K1}, \cite{KH}).

Thus we have come to an upper estimates. A lower one can be obtained
similarly.
\end{proof}

\smallskip

The next lemma reflects a uniformity of the Anosov systems.
\begin{lemma}
\label{uniform} For every $a>0$ there exist $b(a)>0$ and
$\varepsilon_a>0$ such that for all $x,y\in M$ and
$\varepsilon<\varepsilon_a$,
$$
\frac{1}{b(a)}\le\frac{\mu(B(x,a\varepsilon))}{\mu(B(y,\varepsilon))}\le b(a),\
\frac{1}{b(a)}\le\frac{\mu^u(B^u(x,a\varepsilon))}
{\mu^u(B^u(y,\varepsilon))}\le b(a).
$$
\end{lemma}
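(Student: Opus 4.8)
The plan is to reduce Lemma~\ref{uniform} to the single statement that $M$ and the family of its local unstable leaves have \emph{uniformly bounded geometry}, so that the volume of a metric ball of radius $r$ is comparable, with a constant independent of the center, to the volume of a Euclidean ball of radius $r$ of the appropriate dimension. Concretely, writing $n=\dim M$, I would first establish that there are $r_0>0$ and $D\ge 1$, depending only on $M$, with
$$
D^{-1}r^{n}\le\mu(B(x,r))\le D\,r^{n}\qquad(x\in M,\ 0<r<r_0).
$$
This is standard: the injectivity radius of the compact manifold $M$ is bounded below by some $r_0>0$, so for $r<r_0$ the map $\exp_x$ is a diffeomorphism of the Euclidean ball $B_{T_xM}(0,r)$ onto $B(x,r)$; in normal coordinates the metric tensor equals $\delta_{ij}$ at the origin and, by compactness, stays within a fixed factor of the Euclidean one on all of $B_{T_xM}(0,r_0)$, uniformly in $x$, so the volume density $\sqrt{\det g}$ is bounded above and below by absolute constants, and integrating it over $B_{T_xM}(0,r)$ gives the displayed inequalities (alternatively, the uniform sectional-curvature bounds on $M$ yield the same via the G\"unther and Bishop--Gromov volume comparison inequalities). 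Granting this, for $a\ge 1$, $x,y\in M$ and $\varepsilon<\varepsilon_a:=r_0/a$ one gets
$$
D^{-2}a^{n}\le\frac{\mu(B(x,a\varepsilon))}{\mu(B(y,\varepsilon))}\le D^{2}a^{n},
$$
so $b(a):=D^{2}\max(a^{n},a^{-n})$ works; the case $a<1$ follows by interchanging the roles of the two balls.

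For the second inequality I would run the same argument inside the leaves, the only extra input being that their intrinsic geometry is controlled uniformly in the base point. Since $E^u$ is a continuous subbundle of $TM$ over the connected manifold $M$ it has constant rank $d^u$; moreover it is H\"older, hence uniformly, continuous, and $T_zW_\delta^u(x)=E^u(z)$ for every $z\in W_\delta^u(x)$. Fixing once and for all a finite atlas of $M$, each piece $W_\delta^u(x)$ is therefore covered by finitely many charts in which it is the graph over the $d^u$-plane $E^u(x)$ of a $C^{1}$ map whose derivative is bounded by a constant independent of $x$ (this uniformity is precisely what the graph-transform construction of the unstable leaves produces over the compact manifold $M$; see \cite{KH}, \cite{K1}). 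Pulling back the induced Riemannian volume $\mu^u$ and metric $\rho^u$ to that $d^u$-plane and comparing with Lebesgue measure there, just as in the ambient case, yields $r_0'>0$ and $D'\ge 1$ with
$$
{D'}^{-1}r^{d^u}\le\mu^u(B^u(x,r))\le D'\,r^{d^u}\qquad(x\in M,\ 0<r<r_0'),
$$
whence the second inequality of the lemma, with $b(a)$ replaced by ${D'}^{2}\max(a^{d^u},a^{-d^u})$. Finally one takes for $b(a)$ the larger of the two constants and for $\varepsilon_a$ the smaller of the two thresholds.

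The change-of-variables estimates are routine; the one point that requires care — and which I expect to be the main obstacle — is the \emph{uniformity in $x$} of the geometric control of the unstable leaves, since $x\mapsto W_\delta^u(x)$ is in general only H\"older, not smooth, in its dependence on the base point. What rescues the argument is that one never needs more than each leaf being, in the fixed local charts, a $C^{1}$-small graph with \emph{uniform} derivative bounds, and this is guaranteed by the compactness of $M$ together with the uniform continuity of the unstable distribution; it is the same bounded-distortion mechanism already exploited for the unstable Jacobian in the proof of Lemma~\ref{limit1}. In particular the measures $\mu^u$ are genuine $d^u$-dimensional Riemannian volumes whose densities, read in these charts, are bounded above and below by constants independent of the leaf, which is all the proof uses.
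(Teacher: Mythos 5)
The paper states Lemma~\ref{uniform} without any proof at all (it is offered as a standard reflection of the ``uniformity of Anosov systems''), so there is nothing in the text to compare your argument against; what matters is whether your argument is sound, and it is. Reducing both inequalities to uniform two-sided bounds $D^{-1}r^{n}\le\mu(B(x,r))\le Dr^{n}$ and ${D'}^{-1}r^{d^u}\le\mu^u(B^u(x,r))\le D'r^{d^u}$ is exactly the right move: the ambient bound follows from the lower bound on the injectivity radius and the uniform comparability of $g$ with the Euclidean metric in normal coordinates, and the leafwise bound follows because the local unstable manifolds produced by the graph transform are graphs over $E^u(x)$ with a derivative bound uniform in $x$, so that the intrinsic metric $\rho^u$ and the leaf volume density are uniformly comparable to their Euclidean counterparts on the $d^u$-plane $E^u(x)$. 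You correctly isolate the one genuinely delicate point, namely that the H\"older (not smooth) dependence of the leaf on the base point is irrelevant since only uniform $C^1$ bounds on each individual leaf are used. Two small housekeeping items you implicitly handle but should state: $\varepsilon_a$ must also be taken small enough that $B^u(x,a\varepsilon)$ stays inside the local leaf $W^u_\delta(x)$ (i.e.\ $a\varepsilon<r_0'$ with $r_0'$ smaller than a fixed fraction of $\delta$), and the constancy of $\dim E^u$ used for the exponent $d^u$ is part of the definition of an Anosov diffeomorphism on a connected $M$. With those remarks the proof is complete.
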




We now turn immediately to the proof of the Theorem. First we
construct by induction a finite sequence of points $y_i\in
f^{k(\varepsilon)}P_u(x,\varepsilon)$, $1\le i\le N(\varepsilon)$, such that the
balls $B(y_i,\varepsilon)$ cover the set
$f^{k(\varepsilon)}P_u(x,\varepsilon)$ and
\begin{equation}
\label{eps3} B(y_i,\varepsilon/3)\cap
B(y_j,\varepsilon/3)=\varnothing,\ \ 1\le i<j\le N(\varepsilon).
\end{equation}
We start with an arbitrary $y_1$. If $y_1,\ldots,y_m$ are already
chosen and the balls $B(y_i,\varepsilon)$, $1\le i\le m$, do not
cover $f^{k(\varepsilon)}P_u(x,\varepsilon)$, take an arbitrary
non-covered point as $y_{m+1}$. Clearly,
\begin{equation}
\label{far} \rho(y_i,y_j)\ge\varepsilon\text{ when }1\le i,j\le
m+1,\ i\ne j.
\end{equation}
Hence this process will stop after a finite number of steps since
$M$ is compact. Using (\ref{far}), we also obtain (\ref{eps3}).
\smallskip

\noindent {\bf An upper bound estimate.} Lemma 1 and the fact that
$f$ is expanding along unstable manifolds imply that
\begin{equation}
\label{Cup}
O^u_{\varepsilon}(f^{k(\varepsilon)}P_u(x,\varepsilon))\subset
O_{\varepsilon}^u(f^{k(\varepsilon)}B^u(x,C_2\varepsilon))\subset
f^{k(\varepsilon)}B^u(x,(C_2+1)\varepsilon).
\end{equation}
From (\ref{eps3}), (\ref{Cup}) we obtain
\begin{equation}\label{nums}
N(\varepsilon)\leq\frac{\mu^u(f^{k(\varepsilon)}B^u(x,
(C_2+1)\varepsilon))}{\min\limits_{1\le j\le N(\varepsilon)}
\mu^u(B^u(y_j,\varepsilon/3))}.
\end{equation}

Since $f$ is contracting along stable manifolds, the sets
$f^{k(\varepsilon)}B(x,\varepsilon)$ and
$f^{k(\varepsilon)}P_u(x,\varepsilon)$ approach each other as
$\varepsilon\to 0$. Therefore for $\varepsilon$ small enough
\begin{equation}
\label{up}
O^{\varepsilon}(f^{k(\varepsilon)}B(x,\varepsilon))\subset\bigcup_{
1\le i\le N(\varepsilon)}B(y_i,2\varepsilon).
\end{equation}
By (\ref{nums}) -- (\ref{up})
\begin{align*} \frac{1}{k(\varepsilon)}
\ln\frac{\mu\left(O^{\varepsilon}(f^{k(\varepsilon)}
B(x,\varepsilon))\right)}{\mu(B(x,\varepsilon))}\leq
\frac{1}{k(\varepsilon)}\ln\frac{N(\varepsilon)\max_{1\le i\le
N(\varepsilon)}\mu(B(y_i,2\varepsilon))}
{\mu(B(x,\varepsilon))}\notag \\
\leq\frac{1}{k(\varepsilon)}\ln\frac{\mu^u(f^k B^u(x,
(C_2+1)\varepsilon))}{\mu^u(B^u(x, (C_2+1)\varepsilon))}
+\frac{1}{k(\varepsilon)}\ln
\frac{\mu^u(B^u(x,(C_2+1)\varepsilon))}{\min_{1\le i\le
N(\varepsilon)}\mu^u(B^u(y_i,\varepsilon/3))}\notag \\
+\frac{1}{k(\varepsilon)}\ln\frac{\max_{1\le i\le N(\varepsilon)}\mu
(B(y_i,2\varepsilon))}{\mu(B(x,\varepsilon))}.
\end{align*}
By Lemma \ref{limit1} the first term gives us the sum of the
positive Lyapunov exponents, while by Lemma \ref{uniform} the last
two terms tend to zero.
\smallskip

\noindent {\bf A lower bound estimate.} As is easy to verify, there
exists a constant $\overline{C}$ such that for $\delta,\varepsilon$
small enough
\begin{equation*}
\label{shar-projection} B^u (y,\varepsilon)\subset B(y,
\varepsilon)\cap W^u_\delta(y) \subset
B^u(y,\overline{C}\varepsilon), \quad y\in M.
\end{equation*}
From this we obtain
\begin{equation}
\label{center-inf} N(\varepsilon)\geq\frac{\mu^u( f^k P_u(x,
\varepsilon))}{\max_{1\le i\le N(\varepsilon)}\mu^u(B^u(y_j,
\overline{C}\varepsilon))}.
\end{equation}
Now (\ref{eps3}) and (\ref{center-inf}) yield
\begin{align*}
\frac{1}{k(\varepsilon)} &
\ln\frac{\mu\left(O_{\varepsilon}(f^{k(\varepsilon)}
B(x,\varepsilon))\right)}{\mu(B(x,\varepsilon))}
\geq\frac{1}{k(\varepsilon)}\ln\frac{N(\varepsilon)\min_{1\le i\le
N(\varepsilon)}\mu(B(y_,\varepsilon/3))}
{\mu(B(x,\varepsilon))}\ge \notag \\
\frac{1}{k(\varepsilon)} & \ln\frac{\mu^u(f^k B^u(x,
C_1\varepsilon))}{\mu^u(B^u(x, C_1\varepsilon))}+\frac{1}{k(\varepsilon)} \ln\frac{\mu^u(B^u(x,
C_1\varepsilon))}{\max_{1\le i\le N(\varepsilon)}
\mu^u(B^u(y_j,\overline{C}\varepsilon))}\notag \\
+ & \frac{1}{k(\varepsilon)}\ln\frac{\min_{1\le i\le N(\varepsilon)}\mu(B(y_,
\varepsilon/3))}{\mu(B(x,\varepsilon))}.
\end{align*}
As above, the first term tends to the sum of the positive Lyapunov
exponents, and the last two ones vanish as $\varepsilon\to 0$.

This completes the proof of the Theorem.

\begin{corollary}
If the assumptions of the above theorem are satisfied and if $\nu$
is an SBR measure for $f$ (see \cite{LY1}), then the left-hand side
of (\ref{main}) is $h_\nu(f)$.
\end{corollary}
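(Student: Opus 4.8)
The plan is to deduce the Corollary from the Theorem together with the classical relation between entropy and positive Lyapunov exponents. By the Theorem, the left-hand side of (\ref{main}) equals $\lambda_\nu^+=\sum_{i:\lambda_i>0}\lambda_i d_i$ for $\nu$-a.e.\ $x$, so the whole statement reduces to the identity
\[
h_\nu(f)=\sum_{i:\lambda_i>0}\lambda_i d_i ,
\]
i.e.\ to the assertion that, for an SBR measure, Ruelle's inequality is in fact an equality.

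First I would recall Ruelle's inequality, valid for any $C^1$ diffeomorphism of a compact manifold and any $f$-invariant Borel probability measure: $h_\nu(f)\le\sum_{i:\lambda_i>0}\lambda_i d_i$. This provides one direction with no extra hypothesis. For the reverse inequality I would invoke the Pesin entropy formula in the sharp form of Ledrappier and Young (\cite{LY1}): for a $C^{1+\alpha}$ diffeomorphism, the equality $h_\nu(f)=\sum_{i:\lambda_i>0}\lambda_i d_i$ holds \emph{if and only if} the conditional measures of $\nu$ on unstable manifolds are absolutely continuous, that is, precisely when $\nu$ is an SBR measure in the sense of \cite{LY1}. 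Since the regularity $f\in C^{1+\alpha}(M)$ and ergodicity of $\nu$ are already part of the hypotheses of the Theorem, and $\nu$ is assumed to be an SBR measure, the Ledrappier--Young criterion applies and yields the needed equality; combining it with the Theorem gives that the left-hand side of (\ref{main}) equals $h_\nu(f)$.

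The only point requiring attention is purely bookkeeping: one must check that the notion of ``SBR measure'' used here coincides with the class of measures for which the entropy formula of \cite{LY1} is stated (absolutely continuous conditional measures on unstables), so that no additional smoothness or hyperbolicity assumption is implicitly needed beyond what the Theorem already grants. Once the definitions are matched, the Corollary is immediate, and there is no substantive obstacle.
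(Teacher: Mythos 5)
Your proposal is correct and is precisely the argument the paper intends: the citation of \cite{LY1} in the statement of the Corollary signals that one combines the Theorem (which identifies the left-hand side of (\ref{main}) with $\lambda_\nu^+$) with the Ledrappier--Young characterization of SBR measures as exactly those satisfying Pesin's entropy formula $h_\nu(f)=\sum_{i:\lambda_i>0}\lambda_i d_i$. Your added care about matching the definition of ``SBR measure'' to the absolute-continuity condition of \cite{LY1} is the right bookkeeping point, and there is no gap.
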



The authors are deeply indebted to D. Burago for useful comments.

\end{document}